\DeclarePairedDelimiter{\abs}{\lvert}{\rvert}
\DeclarePairedDelimiter{\norm}{\lVert}{\rVert}
\NewDocumentCommand{\normL}{ s O{} m }{%
  \IfBooleanTF{#1}{\norm*{#3}}{\norm[#2]{#3}}_{L_2(\Omega)}%
}
\title{A Generalization of Renault's Theorem for Cartan Subalgebras}
\author{Ali I. Raad}
\address{Ali I. Raad, Department of Mathematics, KU Leuven, 200B Celestijnenlaan, 3001 Leuven, Belgium}
\email{ali.imadraad@kuleuven.be}
\begin{document}

\maketitle

\newtheorem{thm}{Theorem}[section]
\newtheorem{defn}[thm]{Definition}
\newtheorem{lem}[thm]{Lemma}
\newtheorem{cor}[thm]{Corollary}
\newtheorem{propn}[thm]{Proposition}
\newtheorem{axm}[thm]{Axiom}
\newtheorem{metthm}[thm]{Metatheorem}
\newtheorem{rmk}[thm]{Remark}
\let\olddefinition\rmk
\renewcommand{\rmk}{\olddefinition\normalfont}
\newtheorem{exmp}[thm]{Example}

\begin{abstract}
We prove a generalized version of Renault's theorem for Cartan subalgebras. We show that the original assumptions of second countability and separability are not needed. This weakens the assumption of topological principality of the underlying groupoid to effectiveness. 
\end{abstract}

\begin{section}{Introduction and statement of Results}
Jean Renault, in \cite{Ren}, characterises Cartan subalgebras of separable $C^{*}$-algebras via certain étale twisted groupoids. Specifically, we have:

\begin{thm}[Renault's Theorem, 5.2 and 5.9 in \cite{Ren}]\label{Renaults classification theorem}
Let $(G,\Sigma)$ be a twisted étale Hausdorff locally compact second countable topologically principal groupoid. Then $C_{0}(G^{0})$ is a Cartan subalgebra of $C^{*}_{r}(G,\Sigma)$. 

Conversely, if $B$ is a Cartan subalgebra of a separable $C^{*}$-algebra $A$, then there exists a twisted étale Hausdorff locally compact second countable topologically principal groupoid $(G,\Sigma)$ and an isomorphism which carries $A$ onto $C^{*}_{r}(G,\Sigma)$ and $B$ onto $C_{0}(G^{0})$. 
\end{thm}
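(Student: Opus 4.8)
The plan is to treat the two directions separately: verify the Cartan axioms directly in the forward direction, and carry out a reconstruction of $(G,\Sigma)$ from $(A,B)$ in the converse. Recall that a Cartan subalgebra $B \subseteq A$ is a maximal abelian subalgebra which is regular (its normalizers $N(B) = \{n \in A : nBn^* \subseteq B,\ n^*Bn \subseteq B\}$ generate $A$), is the image of a faithful conditional expectation $P \colon A \to B$, and contains an approximate unit for $A$.

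For the forward direction, I would identify $C_{0}(G^0)$ with the sections of $\Sigma$ supported on the unit space and check the four axioms in turn. The conditional expectation $P \colon C^{*}_{r}(G,\Sigma) \to C_{0}(G^0)$ is the restriction map sending a section to its values on $G^0$, and its faithfulness is immediate from the definition of the reduced norm via the left regular representations on the fibres $\ell^2(G_x)$. Since $G$ is étale, $G^0$ is clopen, so $C_{0}(G^0)$ already contains an approximate unit for the whole algebra. Regularity holds because any section supported on an open bisection normalizes $C_{0}(G^0)$, and such sections span a dense subalgebra. Maximal abelianness is where topological principality enters: a short computation shows that any element commuting with all of $C_{0}(G^0)$ is supported on the isotropy bundle $G' = \{g : s(g) = r(g)\}$, and topological principality forces the interior of $G'$ to be exactly $G^0$, so such an element is supported on $G^0$ and therefore lies in $C_{0}(G^0)$.

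The converse is the substantial direction and proceeds by reconstructing $(G,\Sigma)$. Writing $X = \hat B$ so that $B \cong C_{0}(X)$, each normalizer $n \in N(B)$ induces a partial homeomorphism $\alpha_n$ of $X$, whose domain is the open support of $n^*n$ and which is characterized by $n^* b n = (b \circ \alpha_n)\, n^*n$ for $b \in B$. One then forms the Weyl groupoid $G$ whose underlying set consists of germs $[n,x]$, with $x$ in the domain of $\alpha_n$, two germs being identified when the inducing normalizers agree as partial homeomorphisms on a neighbourhood of $x$; the unit space is $X$, with source and range read off from $\alpha_n$. The twist $\Sigma$ is built from a finer equivalence that remembers the phase of the normalizer and not merely the induced homeomorphism, producing a central extension by the circle bundle $X \times \mathbb{T}$. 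Separability of $A$ guarantees that $X$ is second countable, which transfers to $G$ and $\Sigma$.

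The final and most delicate step is producing the isomorphism $A \cong C^{*}_{r}(G,\Sigma)$, and this is where I expect the main obstacle to lie. I would send a normalizer $n$ to the section of $\Sigma$ supported on the bisection determined by $\alpha_n$ with its phase data, and extend by density using regularity, the crux being to show this assignment is well-defined, multiplicative, and isometric for the reduced norm. To control the norm I would realize $A$ as acting on the right Hilbert $B$-module obtained by completing $A$ in the $B$-valued inner product $\langle a, a' \rangle = P(a^* a')$; faithfulness of $P$ makes the left action faithful, and the parallel construction on the groupoid side reproduces exactly the reduced norm, so matching the two modules through the normalizer-to-section correspondence yields an isometric isomorphism carrying $B$ onto $C_{0}(G^0)$. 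Finally, verifying that the reconstructed groupoid is topologically principal—so that the construction lands in the stated class—relies on second countability, since it is there that a Baire category argument upgrades the effectiveness coming from the faithfulness of $P$ to a dense set of units with trivial isotropy.
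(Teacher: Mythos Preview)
Your outline is correct and follows the same strategy as Renault's proof summarized in Section~2 of the paper: the restriction expectation, bisection-supported normalizers, and commutant-supported-in-isotropy argument for the forward direction, and the Weyl groupoid of germs $G(B)$ with its twist $\Sigma(B)$ together with the section map $a \mapsto \hat a$ (in Renault's notation $\hat a(x,n,y) = P(n^*a)(y)/\sqrt{n^*n(y)}$) for the converse. Two small remarks: effectiveness of $G(B)$ is automatic because it is a groupoid of germs, not a consequence of the faithfulness of $P$; and Renault establishes the isometry via the fibre representations $\pi_x$ on $L^2(G_x)$ rather than through the Hilbert $B$-module completion you describe, though the two packagings are equivalent.
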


In this paper we generalize this theorem. Specifically, for the first statement of Theorem \ref{Renaults classification theorem}, we remove the second countability assumption and also weaken topological principality to merely assuming effectiveness of the groupoid. For the converse statement we remove separability, but pay the price by obtaining a groupoid that is not necessarily second countable and thus not necessarily topologically principal. We obtain:

\begin{thm}\label{my theorem}
Let $(G,\Sigma)$ be a twisted étale Hausdorff locally compact effective groupoid. Then $C_{0}(G^{0})$ is a Cartan subalgebra of $C^{*}_{r}(G,\Sigma)$. 

Conversely, if $B$ is a Cartan subalgebra of a $C^{*}$-algebra $A$, then there exists a twisted étale Hausdorff locally compact effective groupoid $(G,\Sigma)$ and an isomorphism which carries $A$ onto $C^{*}_{r}(G,\Sigma)$ and $B$ onto $C_{0}(G^{0})$. 
\end{thm}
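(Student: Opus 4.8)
The plan is to treat the two implications separately and, in each, to isolate the single point at which Renault invokes second countability and replace it by an argument resting on effectiveness. For the forward direction I would verify the defining properties of a Cartan subalgebra one at a time. That $C_0(G^0)$ contains an approximate unit of $C^*_r(G,\Sigma)$, and that the restriction map $P\colon C^*_r(G,\Sigma)\to C_0(G^0)$, $P(a)=j(a)|_{G^0}$, is a faithful conditional expectation, are local facts about the reduced construction: $P$ is faithful because the reduced norm is assembled from the regular representations indexed by the units $x\in G^0$, and none of this uses countability. Regularity follows from the étale structure, since every $f\in C_c(G,\Sigma)$ supported on an open bisection is a normalizer and such functions span a dense subalgebra by a partition of unity. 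The only substantive point is that $C_0(G^0)$ is \emph{maximal} abelian, and this is exactly where effectiveness enters.

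Here I would use the injective, norm-decreasing map $j\colon C^*_r(G,\Sigma)\to C_0(G)$ with continuous image. If $a$ lies in the commutant of $C_0(G^0)$, then $j(a)$ is supported on the isotropy bundle $\mathrm{Iso}(G)=\{g: s(g)=r(g)\}$. Since $j(a)$ is continuous, the set where it is nonzero is open and contained in $\mathrm{Iso}(G)$, hence in $\mathrm{int}\,\mathrm{Iso}(G)$; effectiveness says $\mathrm{int}\,\mathrm{Iso}(G)=G^0$, so $j(a)$ is supported on $G^0$ and $a\in C_0(G^0)$. This is the only step of the forward direction where Renault uses topological principality, and since that hypothesis serves precisely to force this maximality—and implies effectiveness, coinciding with it in the second countable case—replacing it by effectiveness is both legitimate and exactly sufficient, with no countability required.

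For the converse I would follow the Kumjian--Renault construction of the Weyl groupoid of germs. Set $G^0=\hat B$, let $N(B)$ be the normalizers, and form $G$ from germs $[n,x]$ of the partial homeomorphisms $\alpha_n$ of $\hat B$ induced by normalizers, with $\Sigma$ the twist recording the phase of $n$ at $x$. The verifications that $G$ is étale, Hausdorff and locally compact are carried out pointwise and topologically, and survive the removal of separability, though Hausdorffness should be re-examined since it rests on the faithful expectation rather than on any countability. The crucial replacement is effectiveness of $G$: a germ $[n,x]$ lies in $\mathrm{int}\,\mathrm{Iso}(G)$ precisely when $\alpha_n$ is the identity on a neighbourhood $U$ of $x$; then for any $f\in C_0(U)$ the cutdown $fn$ commutes with all of $B$ and so lies in $B$ by maximality, whence the germ $[n,x]$ agrees with one coming from $B$ and is a unit. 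Thus $\mathrm{int}\,\mathrm{Iso}(G)=G^0$, and effectiveness follows directly from the MASA property with no recourse to Baire category or second countability—this is exactly the step where Renault subsequently spent separability to upgrade effectiveness to topological principality, which we simply forgo.

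It then remains to produce the isomorphism $A\cong C^*_r(G,\Sigma)$ carrying $B$ onto $C_0(G^0)$, and this is where I expect the main obstacle to lie. Renault's isomorphism is built from $a\mapsto\hat a$, where $\hat a([n,x])$ is extracted from the expectation via $P(n^*a)(x)$, and proving that this is a well-defined isometric $*$-isomorphism requires matching the norm on $A$ with the reduced norm on $C^*_r(G,\Sigma)$. In the separable setting this is controlled through a disintegration of the expectation over $\hat B$, a step usually phrased for second countable (standard Borel) groupoids. The hard part is therefore to re-derive injectivity and isometry without disintegration: I would argue directly from faithfulness of $P$ and the family of regular representations $\{\lambda_x\}_{x\in\hat B}$, establishing $\|a\|_A=\sup_x\|\lambda_x(\hat a)\|$ by comparing the two faithful expectations on the dense $*$-subalgebra generated by normalizers, so that the construction never passes through a measure decomposition and the removal of separability is harmless.
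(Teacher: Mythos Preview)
Your proposal is correct and follows essentially the same architecture as the paper, but the emphasis differs in a way worth flagging. The paper's central technical observation is that the only genuine uses of second countability in Renault's arguments are appeals to Urysohn's lemma (separating compacts from opens, points from closed sets, finding relatively compact neighbourhoods), and that these separation properties already hold in any locally compact Hausdorff space; once this is isolated as a lemma, Renault's proofs go through verbatim. You instead foreground the effectiveness-versus-topological-principality replacement, which is indeed the conceptual shift, but you underplay the role of the separation lemma in the converse (it is needed for the extension $\mathcal{B}\to\Sigma(B)\to G(B)$, for injectivity of $a\mapsto\hat a$, and for Hausdorffness of $G(B)$). For effectiveness of the Weyl groupoid the paper takes a shorter route than yours: rather than deducing it from the MASA property, it simply observes that an \'etale groupoid of germs is automatically effective. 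Finally, your anticipated ``main obstacle''---a disintegration of the expectation requiring standard Borel structure---does not actually arise: Renault's isometry argument in his Theorem~5.9 already proceeds by directly comparing the faithful expectation on $A$ with the one on $C^*_r(G(B),\Sigma(B))$ via the regular representations, exactly the workaround you propose, so no measure-theoretic decomposition is ever invoked and the paper simply asserts that this step carries over unchanged.
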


The following definition of a Cartan subalgebra is due to Renault in \cite{Ren}:

\begin{defn}\label{cartandef}
A $C^{*}$-subalgebra $B$ of a $C^{*}$-algebra $A$ is a Cartan subalgebra if 
\begin{itemize}
\item $B$ contains an approximate unit for $A$,
\item $B$ is a masa (maximal Abelian subalgebra) in $A$,
\item $B$ is regular in $A$ (in other words the normalizer set of $B$, $N(B) = \{a \in A : aBa^{*} \subset B \; \; \textrm{and} \; \; a^{*}Ba \subset B\}$, generates $A$ as a $C^{*}$-algebra), and finally,
\item there exists a faithful conditional expectation $P:A \twoheadrightarrow B $. 
\end{itemize}
\end{defn}

\begin{rmk}\label{review remark}
There exists a recent result by Pitts \cite{pitts} showing that the approximate unit condition in Definition \ref{cartandef} is redundant as it follows from the other conditions. 
\end{rmk}

Section 2 of this paper is devoted to summarising Renault's proofs from \cite{Ren}. Here we will highlight the main ideas in the construction, especially those ideas that we will eventually generalize.

Section 3 provides the argument for Theorem \ref{my theorem}. Here we develop the techniques required to generalize the relevant sections of Renault's proof, and highlight where they are used. 

Our approach includes using an Urysohn type lemma for locally compact Hausdorff spaces that are not necessarily second countable, in order to obtain certain separation results which are used throughout Renault's proofs in \cite{Ren}. Of course, if one assumes second countability, then paracompactness follows and hence so does normality, which yields the more standard version of Urysohn's lemma, and obtaining certain separation functions becomes trivial in this setting. We modify slightly some of Renault's proofs in \cite{Ren} in order to not assume second countability. By removing the assumptions of second countability of the groupoid and separability of the $C^*$-algebra one obtains groupoids that are not necessarily topologically principal, but effective. 

One of the advantages of Theorem \ref{my theorem} is that it may be applied to Cartan subalgebras of $C^*$-algebras that are not necessarily separable. An important class of non-separable $C^*$-algebras are the uniform Roe algebras, which are of interest as they build a link to coarse geometry (see Section 1 in \cite{wil}). These have Cartan subalgebras (see Section 6 in \cite{liren}) which fall outside that which Renault's theorem can capture. In addition, the authors of \cite{liren} obtain a distinguished Cartan subalgebra by using a slight modification of Renault's theorem, where second countability of the groupoid is weakened to $\sigma$-compactness. Of course with the more general Theorem \ref{my theorem}, this is not necessary. 

We would like to point out Corollary 7.6 in \cite{kwas} where the same conclusion as Theorem \ref{my theorem} is obtained, but via a different approach. Our approach aims at following and directly generalizing the steps in \cite{Ren}.

\textbf{Acknowledgements:} This research was conducted in 2018 as part of my PhD project in Queen Mary University of London and the University of Glasgow. This project has received funding from the European Research Council (ERC) under the European Union's Horizon 2020 research and innovation programme (grant agreement No. 817597). The author was supported by the Internal KU Leuven BOF project C14/19/088 and project G085020N funded by the Research Foundation Flanders (FWO). I would like to thank Xin Li for his supervision in this project.

\end{section}

\begin{section}{Summary of Renault's Proof}
This section serves as a summary of the constructions in \cite{Ren}, which yield Theorem \ref{Renaults classification theorem}. We assume throughout that the reader is familiar with the basic notions in the theory of étale groupoids. Information on this may be found, amongst other sources, in Chapter 3 of \cite{Put}, Chapters 1.1 and 1.2 of \cite{Ren book}, and/or Chapters 2 and 3 of \cite{Sim}. 

The first statement in Theorem \ref{Renaults classification theorem} is that $C_0(G^0)$ is a Cartan subalgebra of $C^*_r(G,\Sigma)$ for a twisted étale Hausdorff locally compact second countable topologically principal groupoid. In order to explain how this arises, we start by defining the notions of topological principality, the twist $\Sigma$, and how one obtains a $C^*$-algebra $C^*_r(G,\Sigma)$ from such groupoids. Thereafter we show that $C_0(G^0)$ is a Cartan subalgebra of $C^*_r(G,\Sigma)$, which involves showing that it satisfies the requirements of Definition \ref{cartandef}.

\begin{defn}
An étale groupoid $G$ is topologically principal if the set of points in $G^{0}$ with trivial isotropy is dense in $G^{0}$.
\end{defn}

We now summarize pages 39-41 of \cite{Ren} and pages 975-976 of \cite{Kumjian}, which define the twist and show how to obtain a $C^*$-algebra. Let $\Sigma$ be a locally compact groupoid that is also a principal $\mathbb{T}$-space. Let $G:= \Sigma / \mathbb{T}$, which is made into a topological groupoid in the natural way. 

This gives rise to a $\mathbb{T}$-bundle: $$\mathbb{T} \rightarrow \Sigma \rightarrow G.$$ We say $\Sigma$ is a \emph{twist} over $G$. In the language of exact sequences this is equivalent to a central extension: $$\mathbb{T} \times G^{0} \hookrightarrow \Sigma \twoheadrightarrow G.$$ 

It is convenient to consider another $\mathbb{T}$-bundle: $$\mathbb{T} \rightarrow \mathbb{C} \times \Sigma \rightarrow (\mathbb{C} \times \Sigma) / \mathbb{T}. $$ The $\mathbb{T}$-action is given by $t(z,\gamma) = (\overline{t}z,t\gamma)$, and of course the projection onto the base space is the canonical projection onto orbit classes $[z,\gamma]$. Set $L:= (\mathbb{C} \times \Sigma) / \mathbb{T}$ and form the complex line bundle: $$\mathbb{C} \rightarrow L \rightarrow G.$$ 

The projection map onto the base space is given by $[z,\gamma] \to \dot{\gamma}$, where $\dot{}$ is the canonical projection $\Sigma \rightarrow G$. Continuous sections of this line bundle have a representation via $\mathbb{T}$-equivariant continuous maps $\Sigma \rightarrow \mathbb{C}$ (maps satisfying $f(t\gamma)=\overline{t}f(\gamma)$ for all $t \in \mathbb{T}$).

Let $G$ be a locally compact Hausdorff groupoid with Haar system $\{\lambda_{x} : x \in G^{0}\}$, and let $\Sigma$ be a twist over $G$. We denote this pairing of a groupoid and its twist by $(G,\Sigma)$. Consider the space of compactly supported continuous sections $G \rightarrow L$, which we denote by $C_{C}(G,\Sigma)$. Define a multiplication and involution on $C_{C}(G,\Sigma)$ which turns it into a *-algebra, as follows: for $f,g \in C_{C}(G,\Sigma)$, let 

\begin{equation}\label{multiplication}
fg(\sigma) = \int\limits_{G}f(\sigma \tau^{-1})g(\tau)\mathrm{d}\lambda_{s(\sigma)}(\dot{\tau}), \; \; f^{*}(\sigma) = \overline{f(\sigma^{-1})}.
\end{equation}

For each $x \in G^{0}$, let $H_{x} = L^{2}(G_{x},L_{x},\lambda_{x})$, where $L_{x}:=p^{-1}(G_{x})$ where $p$ is the projection $L \rightarrow G$. Then define $\pi_{x}: C_{C}(G,\Sigma) \rightarrow \mathcal{B}(H_{x})$ by $$\pi_{x}(f)\zeta(\sigma) = \int\limits_{G}f(\sigma \tau^{-1})\zeta(\tau)\mathrm{d}\lambda_{x}(\dot{\tau}), \; \; \textrm{where} \; \; \zeta \in H_{x}.$$  

Finally, define the norm $\norm{f}:= \sup\limits_{x \in G^{0}}\norm{\pi_{x}(f)}_{\mathcal{B}(H_{x})}$, and complete $C_{C}(G,\Sigma)$ with respect to this norm, obtaining the reduced twisted groupoid $C^{*}$-algebra $C^{*}_{r}(G,\Sigma)$. It can be shown that for every $f \in C_{C}(G,\Sigma)$, we have $\norm{\pi_{x}(f)} \le \norm{f}_{I}:= \max \left(\sup\limits_{y \in G^{0}} \int\limits_{G} \abs{f}\mathrm{d}\lambda_{y},\sup\limits_{y \in G^{0}} \int\limits_{G} \abs{f^{*}}\mathrm{d}\lambda_{y} \right)$, which can in turn be shown to be a norm on $C_{C}(G,\Sigma)$, known as the $I$-norm. This whole construction does not assume second countability or topological principality of $G$. For thorough details, consult Chapter 2, Section 1, of \cite{Ren book}. 

Let $ supp^{\prime}(f) = \{\gamma \in G : f(\gamma) \neq 0 \}$. Renault shows in Proposition 4.1 and its consequences in \cite{Ren} that we obtain the following properties:

\begin{propn}
Let $(G,\Sigma)$ be a twisted étale locally compact Hausdorff groupoid. Then:
\begin{itemize}
\item For all $f \in C_{C}(G,\Sigma)$ we have $\abs{f(\sigma)}\le \norm{f}$ for every $\sigma \in \Sigma$, and $\int\limits_{G}\abs{f}^{2}\mathrm{d}\lambda_{x} \le \norm{f}^{2}$ for every $x \in G^{0}$.
\item The elements of $C^{*}_{r}(G,\Sigma)$ can be represented as continuous sections of the line bundle $L$. 
\item The multiplication and involution equations in (\ref{multiplication}) are valid for elements of $C^{*}_{r}(G,\Sigma)$. 
\item We have the identification $C_{0}(G^{0})= \{f \in C^{*}_{r}(G,\Sigma) : supp^{\prime}(f) \subset G^{0}\}.$
\end{itemize}
\end{propn}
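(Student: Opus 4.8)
The plan is to exploit the étale structure throughout: since $G$ is étale, each $\lambda_x$ is the counting measure on the fibre $G_x$, and $G^0$ is a clopen subset of $G$ over which the line bundle $L$ is trivial. The four assertions cascade, so I would prove them in the stated order and derive the later ones from the uniform pointwise control supplied by the first.

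\emph{First bullet.} This is the foundation. For fixed $x \in G^0$, I would introduce the section $\xi_x \in H_x$ supported on the single unit over $x$ and equal to $1$ there in a trivialization of $L$ over $G^0$; since $\lambda_x$ is counting measure, $\xi_x$ is a unit vector. A direct computation with the convolution formula collapses the integral defining $\pi_x(f)\xi_x$ to the single point $\tau = x$ and gives $\pi_x(f)\xi_x = f|_{G_x}$ in $H_x$. Hence
\[
\int_G \abs{f}^2 \,\mathrm{d}\lambda_x = \norm{\pi_x(f)\xi_x}_{H_x}^2 \le \norm{\pi_x(f)}^2 \le \norm{f}^2,
\]
which is the second inequality. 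The pointwise bound is then immediate: $\abs{f(\sigma)}$ depends only on $\dot\sigma \in G$, which lies in some fibre $G_x$, and for counting measure a single value is dominated by the whole $\ell^2$-sum, so $\abs{f(\sigma)}^2 \le \int_G \abs{f}^2\,\mathrm{d}\lambda_x \le \norm{f}^2$.

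\emph{Second and third bullets.} I would argue by density and uniform approximation. Since $C_C(G,\Sigma)$ is dense in $C^*_r(G,\Sigma)$, given $a \in C^*_r(G,\Sigma)$ choose $f_n \in C_C(G,\Sigma)$ with $f_n \to a$. By the first bullet $\sup_\sigma \abs{f_n(\sigma) - f_m(\sigma)} \le \norm{f_n - f_m}$, so the $f_n$ converge uniformly to a continuous $\mathbb{T}$-equivariant section, which represents $a$ and is independent of the approximating sequence (again by the first bullet); this is the second bullet. To pass the formulas of (\ref{multiplication}) to the limit, fix $\eta \in C_C(G_x,L_x)$ and $\sigma$: on the left, $\pi_x(f_n) \to \pi_x(a)$ in operator norm, so $\pi_x(f_n)\eta \to \pi_x(a)\eta$ in $H_x$, and since $\lambda_x$ is counting measure point evaluation on $H_x$ is norm-continuous, whence $\pi_x(f_n)\eta(\sigma) \to \pi_x(a)\eta(\sigma)$; on the right, $f_n \to a$ uniformly and $\eta$ has compact support, so the convolution integrals converge. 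This yields the third bullet. Injectivity of the representation $a \mapsto (\text{its section})$, which is what the second bullet requires, follows from these formulas: if the section of $a$ vanishes, then $\pi_x(a)\eta = 0$ for all $\eta$ in a dense subspace of every $H_x$, so $\pi_x(a) = 0$ for all $x$; since the reduced norm is by definition $\sup_x\norm{\pi_x(\cdot)}$, the representation $\bigoplus_x \pi_x$ is isometric and hence faithful, forcing $a = 0$.

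\emph{Fourth bullet.} Here I would use that $G^0$ is clopen in $G$ and $L|_{G^0}$ is trivial. Extending $g \in C_C(G^0)$ by zero produces an element of $C_C(G,\Sigma)$; a computation shows $\pi_x$ sends it to a multiplication operator, so $\norm{g}_{C^*_r} = \norm{g}_\infty$, and completing embeds $C_0(G^0)$ isometrically into $C^*_r(G,\Sigma)$ as sections supported in $G^0$. Conversely, if $a \in C^*_r(G,\Sigma)$ has $supp^{\prime}(a) \subset G^0$, then by the second bullet the restriction of its section to the clopen set $G^0$ is a continuous function that is a uniform limit of elements of $C_C(G^0)$, hence lies in $C_0(G^0)$; the two inclusions give equality. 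I expect the main obstacle to be the first bullet---specifically, pinning down $\xi_x$ and the identity $\pi_x(f)\xi_x = f|_{G_x}$ carefully in the twisted bundle picture---since every later assertion is an extension-by-continuity built on the uniform estimate it provides. Notably none of this uses second countability: the arguments are fibrewise and rely only on the étale (counting measure) structure.
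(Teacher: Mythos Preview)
The paper does not supply its own proof of this proposition: it is stated as a summary of Proposition~4.1 of \cite{Ren} and its consequences, with the proof deferred entirely to Renault. Your argument is correct and is essentially the standard one (and is in substance what Renault does): identify the delta section $\xi_x$ at the unit, compute $\pi_x(f)\xi_x = f|_{G_x}$ to obtain the $\ell^2$ and pointwise bounds, then extend by density using the resulting uniform control. The only place to be slightly more explicit is in the fourth bullet: having shown that the section $\hat a$ of $a$ restricts to some $h \in C_0(G^0)$, you should invoke the injectivity of the section map (from your second bullet) to conclude $a = h$ as elements of $C^*_r(G,\Sigma)$, not merely as sections; this is implicit in what you wrote but worth making explicit.
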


Renault proves that starting with a Hausdorff étale locally compact second countable topologically principal twisted groupoid $(G,\Sigma)$, one can obtain that $C_0(G^0)$ is a Cartan subalgebra of $C^{*}_{r}(G,\Sigma)$ (we will say that $(C^{*}_{r}(G,\Sigma), C_0(G^0))$ is a Cartan pair). 

The first thing to check (as per Definition \ref{cartandef}) is that the subalgebra contains an approximate unit for the $C^*$-algebra. In the second countable case, one can construct a countable approximate unit using the $\sigma$-compactness of the unit space. In view of Remark \ref{review remark} this is of course unnecessary.

Theorem 4.2 in \cite{Ren} shows that an element of $C^*_r(G,\Sigma)$ commutes with $C_0(G^0)$ if and only if its open support is contained in the isotropy bundle $G^\prime$ which then yields that $C_0(G^0)$ is a masa in $C^*_r(G,\Sigma)$ (since $G$ is topologically principal).

Proposition 4.3 in \cite{Ren} asserts the existence of a unique faithful conditional expectation $P: C^*_r(G,\Sigma) \rightarrow C_0(G^0)$ defined by restriction. That this is a faithful conditional expectation can be checked directly by definitions, but uniqueness is obtained through topological principality and second countability of the groupoid. Indeed, Renault shows that any other conditional expectation $Q$ would have to agree with $P$ on $C_C(G,\Sigma)$, by dividing the argument into two cases. First, by considering elements in $C_C(G,\Sigma)$ whose compact support is contained in an open bisection that does not meet $G^0$, and second by considering an arbitrary element $f$ in $C_C(G,\Sigma)$ but reducing to the first case by covering the support of of $f$ by those bisections that do not meet $G^0$, and one that does, and then using a partition of unity subordinate to such a finite cover. 

In this argument, Renault makes crucial use of Urysohn's lemma which allows him to find elements in $C_C(G^0)$ that separate closed subsets from disjoint points. Of course with the space assumed second countable and locally compact, it is regular hence paracompact hence normal, and so Urysohn's lemma applies. 

Finally, one needs the regularity of $C_0(G^0)$ in $C^{*}_{r}(G,\Sigma)$. Renault shows this in Proposition 4.8 and Corollary 4.9 in \cite{Ren}. He proves that the elements of the normalizer set $N(C_0(G^0))$ are exactly those elements of $C^{*}_{r}(G,\Sigma)$ whose open support is a bisection. Since an element in $C_C(G,\Sigma)$ can be written as a finite sum of elements each of whose open support is an open bisection (this is because $G$ is étale and so has a basis of open bisections, and so one can use a partition of unity with respect to a finite cover), and each summand is a normalizer, the result follows. 

This proves how one goes from a Hausdorff étale locally compact second countable topologically principal twisted groupoid $(G,\Sigma)$ to a Cartan pair \newline $(C^*_r(G,\Sigma),C_0(G^0))$, which is the first statement in Theorem \ref{Renaults classification theorem}.

For the reverse statement, Renault starts with an arbitrary Cartan pair $(A,B)$, and constructs a Hausdorff étale locally compact second countable topologically principal twisted groupoid $(G(B),\Sigma(B))$. The construction is given in \cite{Ren}.

Start by letting $X = \mathrm{Spec}(B)$, and define $D(B) = \{(x,n,y) \in X \times N(B) \times X : \alpha_n(y) = x\}$. Here, $\alpha_n : \{x \in X : n^*n(x) > 0\} \rightarrow \{x \in X : nn^*(x) >0 \}$ is the unique homeomorphism satisfying $$n^*bn(x) = b(\alpha_n(x))n^*n(x)$$ for all $b \in B$, $x \in \{x \in X : n^*n(x) > 0\}$ (see Proposition 4.7 in \cite{Ren}). Let $\Sigma(B) = D/\sim $ where $(x,n,y) \sim (x^\prime,n^\prime,y^\prime)$ if and only if $y = y^\prime$ and there exists a $b, b^\prime \in B$ with $b(y), b^\prime(y) > 0$ and $nb=n^{\prime}b^\prime$. $\Sigma(B)$ is the given the groupoid of germs structure (see Section 3 in \cite{Ren}). $G(B)$ is defined as the image of the map taking $[x,n,y]$ in $\Sigma(B)$ to $[x,\alpha_n,y]$. The map is well-defined and $G(B)$ also inherits the groupoid of germs structure.  

We may identify the set $\mathcal{B}:= \{[x,b,x] : b \in B, b(x)\neq 0\}$ with $\mathbb{T} \times X$ via the map $[x,b,x] \to \left(\frac{b(x)}{\abs{b(x)}},x \right)$. 
Proposition 4.14 in \cite{Ren} gives an algebraic extension
$$\mathcal{B} \rightarrow \Sigma(B) \rightarrow G(B).$$

It makes use of the fact that given a point and an open neighbourhood around it in $X$, we may find a compactly supported continuous function on $X$ with compact support inside $U$.

The topology is recovered in Lemma 4.16 in \cite{Ren}, so that $\Sigma(B)$ becomes a locally trivial topological twist over $G(B)$.

Let $L(B)$ be the complex line bundle arising, as we described in this section, as $\mathbb{C} \rightarrow L(B) \rightarrow G(B)$. The aim is to construct continuous sections of this line bundle, which is equivalent to having $\mathbb{T}$-equivariant continuous maps $\Sigma(B) \rightarrow \mathbb{C}$. This is done in Lemma 5.3 in \cite{Ren}, where one defines, for $a \in A$ and $(x,n,y) \in D$, $\hat{a}(x,n,y) = \frac{P(n^*a)(y)}{\sqrt{n^*n(y)}}$ ($P$ is the conditional expectation associated to the Cartan pair), showing that this is independent of choice of representative for a class in $\Sigma(B)$ (so this map can be defined on the quotient), and is continuous and $\mathbb{T}$-equivariant. The lemma also shows that the map $\hat{}$ which sends $ a$ to $\hat{a}$ is injective and linear. Injectivity makes use of the fact that elements in $B$ can separate closed sets from points, which is automatic when $A$ is separable as it implies that $X$ is second countable and so one may apply Urysohn's lemma. 

Proposition 5.7 in \cite{Ren} argues that because the elements $\{\hat{a} : a \in A\}$ separate the points of $G(B)$, this groupoid is Hausdorff. Being a groupoid of germs it is also étale. Again this makes use of Urysohn's lemma. 

Lemma 5.8 in \cite{Ren} proves that the map $\hat{}$ is a *-algebra isomorphism from $A_C$ to $C_C(G(B),\Sigma(B))$, where $A_C$ is the linear span of elements of $N(B)$ whose image under $\hat{}$ has compact support. Theorem 5.9 in \cite{Ren} then extends this to showing that the map is an isometry with respect to the $C^*$-algebra norms, sending $A$ onto $C^*_r(G(B),\Sigma(B))$ and $B$ onto $C_0(G(B)^0)$. Separability of $A$ implies second countability of the groupoid. Topological principality is concluded from Proposition 3.6 in \cite{Ren}.

Hence the construction completes the second statement in Theorem \ref{Renaults classification theorem}. Starting from a Cartan pair $(A,B)$ with $A$ separable we obtain a locally compact Hausdorff étale topologically principal second countable twisted groupoid $(G(B),\Sigma(B))$.

The two procedures, going from twisted groupoids to Cartan pairs and from Cartan pairs to twisted groupoids are inverse to each other. Indeed, Proposition 4.15 in \cite{Ren} tells us that if $(G,\Sigma)$ is a locally compact Hausdorff étale topologically principal second countable twisted groupoid and we let $A=C^*_r(G,\Sigma)$ and $B=C_0(G^0)$ then we obtain an isomorphism of extensions: 

\textbf{Diagram A}
\begin{center}
\begin{tikzcd}
\mathcal{B} \arrow[r] \arrow[d] & \Sigma(B) \arrow[r] \arrow[d] & G(B) \arrow[d] \\
\mathbb{T} \times G^{0} \arrow[r] & \Sigma \arrow[r] & G
\end{tikzcd}
\end{center}

We already have noted the isomorphism of the left vertical arrow. The isomorphism of the right vertical arrow is given by Proposition 4.13 in \cite{Ren}, and the middle vertical arrow is discussed in Proposition 4.15 in \cite{Ren}. In Section 3 we will give alternate proofs of these statements that do not rely on any second countability or topological principality assumptions. 

Diagram A tells us that the process $$(G,\Sigma) \dashrightarrow (A=C^*_r(G,\Sigma),B=C_0(G^0)) \dashrightarrow (G(B),\Sigma(B))$$ is the identity (up to isomorphism). Theorem 5.9 in \cite{Ren} tells us that for a Cartan pair $(A,B)$, the process $$(A,B) \dashrightarrow (G(B),\Sigma(B)) \dashrightarrow (C^*_r(G(B),\Sigma(B)), C_0(G(B)^0))$$ is the identity (up to isomorphism).

Note that the automorphism group of the twisted groupoids can thus be identified with the automorphism group of the associated Cartan pair. 
\end{section}

\begin{section}{Generalizing Renault's Theorem}
This section proves Theorem \ref{my theorem}. We will first prove the first statement in the theorem by focusing on the areas in Renault's construction that make use of the second countability of the groupoid or the topological principality.

We will need the following definition: 

\begin{defn}
An étale groupoid $G$ is called \emph{effective} if $\mathrm{int}(G^\prime)=G^0$ (where $G^\prime$ denotes the isotropy bundle).
\end{defn}

We will require certain separation properties for locally compact Hausdorff spaces, which are used implicitly throughout \cite{Ren}, and which are standard when the topological space is second countable. However, we verify that we have the required results even for non-second countable spaces. 

\begin{lem}\label{separation}
Let $X$ be a locally compact Hausdorff space. Then 
\begin{enumerate}
\item Given a compact subset $K$ of $X$ and an open set $U$ of $X$ such that $K \subset U \subset X$, there exists $b \in C_{0}(X)$ with $b\equiv 1$ on $K$, and 0 outside $U$.
\item Given a closed subset $C \subset X$ and a point $x \in X$ disjoint from $C$, there is $b \in C_{0}(X)$ where $b(x) = 1$ and $b\vert_ C \equiv 0$.
\item Given an open set $U \subset X$ containing a point $x$ there exists an open set $V$ containing $x$ such that $\overline{V}$ is a compact subset of $U$.
\end{enumerate}
\end{lem}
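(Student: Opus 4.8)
The plan is to prove the three statements in the order (3), (1), (2), since each is built from the previous, and crucially to avoid invoking normality of $X$ itself (which in the second countable setting would come for free via paracompactness). The only global hypotheses I will use are local compactness and the Hausdorff property; the observation that replaces normality is that every compact Hausdorff subspace is automatically normal, so the classical Urysohn argument can be run on precompact closures rather than on all of $X$.

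First I would establish (3), which is the heart of the matter. Given $x \in U$, local compactness provides an open $G \ni x$ whose closure $\overline{G}$ is compact. If $U = X$ we are done with $V = G$; otherwise set $C = X \setminus U$, a closed set not containing $x$. The compact set $C \cap \overline{G}$ can be separated from $x$: for each $p \in C \cap \overline{G}$, Hausdorffness yields an open $V_p \ni p$ with $x \notin \overline{V_p}$, and finitely many $V_{p_1}, \dots, V_{p_n}$ cover the compact set $C \cap \overline{G}$. Then $V := G \setminus (\overline{V_{p_1}} \cup \dots \cup \overline{V_{p_n}})$ is an open neighbourhood of $x$ whose closure is contained in the compact set $\overline{G}$ and misses $C$, giving $x \in V \subset \overline{V} \subset U$ with $\overline{V}$ compact. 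The same argument, with $\{x\}$ replaced by an arbitrary compact $K \subset U$, produces a precompact open $V$ with $K \subset V \subset \overline{V} \subset U$, which is exactly what I will feed into (1).

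For (1), I would apply the previous paragraph to the compact set $K$ and the open set $U$ to obtain a precompact open $V$ with $K \subset V \subset \overline{V} \subset U$. Now $\overline{V}$, being compact and Hausdorff, is normal, and $K$ together with the boundary $\partial V = \overline{V} \setminus V$ form disjoint closed subsets of it. The classical Urysohn lemma on $\overline{V}$ furnishes a continuous $g : \overline{V} \to [0,1]$ that is $1$ on $K$ and $0$ on $\partial V$. Define $b$ on $X$ by $b = g$ on $\overline{V}$ and $b = 0$ on $X \setminus V$; the two closed sets $\overline{V}$ and $X \setminus V$ cover $X$ and the definitions agree (both equal $0$) on the overlap $\partial V$, so by the gluing lemma $b$ is continuous. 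By construction $b$ is $1$ on $K$, vanishes outside $V \subset U$, and has support contained in the compact set $\overline{V}$; in particular $b \in C_0(X)$.

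Finally (2) is immediate from (1): given $x \notin C$, the set $X \setminus C$ is open and contains $x$, so taking $K = \{x\}$ and $U = X \setminus C$ in (1) produces $b \in C_0(X)$ with $b(x) = 1$ vanishing outside $X \setminus C$, that is $b|_C \equiv 0$. The main obstacle throughout is purely that $X$ need not be normal, and the entire strategy is to sidestep this by localizing every separation argument to a compact, hence normal, precompact closure; once (3) secures such closures, (1) and (2) reduce to the textbook Urysohn construction.
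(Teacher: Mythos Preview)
Your proof is correct. The paper's own proof differs mainly in organization and in what it is willing to cite: it treats (1) as the known ``Urysohn lemma for locally compact Hausdorff spaces'' (citing Rudin~2.12 and Royden), derives (2) from (1) by noting that locally compact Hausdorff spaces are regular, and then proves (3) independently, again invoking regularity to separate $x$ from the compact set $K \cap U^{c}$. You instead prove (3) first from scratch (your argument is the same separation-from-a-compact-set construction, just unpacked into explicit finite covers rather than quoting regularity), and then use (3) to manufacture a compact---hence normal---ambient space on which the classical Urysohn lemma yields (1). Your route is more self-contained and makes explicit why normality of $X$ is never needed; the paper's route is shorter because it outsources (1) to a reference. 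Substantively the two arguments are the same.
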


\begin{proof}
Claim (1) is Urysohn's lemma for locally compact Hausdorff spaces (see \cite{Rud}, 2.12 or Proposition 15 in Chapter 9, Section 4 of \cite{royd}). Since locally compact Hausdorff spaces are regular, we may use (1) to directly get (2). 

Let us prove (3). By local compactness, there is an open set $O$ containing $x$ and a compact set $K$ containing $O$. Let $D=K \cap U^C$, which is a closed subset of a compact set, hence compact. By regularity, we may find disjoint open sets $O_D$ containing $D$ and $O_x$ containing $x$. Let $V=O_x \cap O$ which is open and contains $x$, and note that $V \subset \overline{V} \subset  \overline{O} \subset K$, hence $\overline{V}$ is compact. Now note also that $\overline{V} \subset \overline{O_x} \subset O_D^C \subset D^C = K^C \cup U$. Since $\overline{V} \subset K$ it follows $\overline{V} \subset U$.
\end{proof}

The following proposition is certainly well-known but we do not have a reference for it.

\begin{propn}
Let $(G,\Sigma)$ be a twisted étale locally compact Hausdorff groupoid. Then $C_{0}(G^{0})$ contains an approximate unit for $C^{*}_{r}(G,\Sigma)$.
\end{propn}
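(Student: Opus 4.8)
The plan is to exhibit a single net in $B := C_0(G^0)$ that serves as an approximate unit for the whole of $A := C^*_r(G,\Sigma)$, without invoking any countability. Since $G^0$ is locally compact Hausdorff, I would index by the directed set $\Lambda$ of compact subsets of $G^0$ (ordered by inclusion) and, for each $K \in \Lambda$, use Lemma \ref{separation}(1) (with $U = G^0$) to pick $e_K \in C_c(G^0) \subseteq C_0(G^0)$ with $0 \le e_K \le 1$ and $e_K \equiv 1$ on $K$. Because $C_0(G^0)$ is a $C^*$-subalgebra of $A$, the embedding is isometric and hence $\norm{e_K} \le 1$ in the $C^*_r$-norm as well. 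Standard facts about $C_0$ of a locally compact Hausdorff space then make $(e_K)_{K\in\Lambda}$ an approximate unit for the commutative algebra $B$; the real content is to show that the same net also approximates every element of $A$.

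The key step I would isolate is a pointwise description of multiplication by elements of $C_0(G^0)$. Since $G$ is étale its Haar system is given by counting measures, so evaluating the convolution (\ref{multiplication}) when one factor $e$ has open support inside $G^0$ (using the identification $C_0(G^0) = \{f : \mathrm{supp}'(f) \subset G^0\}$) collapses the integral to a single term and yields, at the level of sections of $L$,
\[
(ef)(\gamma) = e(r(\gamma))\,f(\gamma), \qquad (fe)(\gamma) = f(\gamma)\,e(s(\gamma)),
\]
the twist entering only through $\mathbb{T}$-equivariance, which is preserved because $e$ is an honest function on the base $G^0$. Verifying this identity carefully in the twisted, counting-measure setting is, I expect, the main obstacle; everything downstream is formal.

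With the identity in hand I would finish by a density and $\varepsilon$-argument. Any $f \in C_c(G,\Sigma)$ has $\overline{\mathrm{supp}'(f)}$ compact, so $L_f := r(\overline{\mathrm{supp}'(f)}) \cup s(\overline{\mathrm{supp}'(f)}) \in \Lambda$; for every $K \supseteq L_f$ the formulae above give $e_K(r(\gamma)) = e_K(s(\gamma)) = 1$ on the support of $f$, whence $e_K f = f = f e_K$ exactly (both sides vanishing off the support). Since $C_c(G,\Sigma)$ is dense in $A$ and $\norm{e_K} \le 1$, a three-$\varepsilon$ estimate of the form $\norm{e_K a - a} \le \norm{a - f} + \norm{e_K f - f} + \norm{f - a}$ (and its right-handed twin) then forces $e_K a \to a$ and $a e_K \to a$ for all $a \in A$. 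This shows $(e_K)_{K \in \Lambda} \subset C_0(G^0)$ is an approximate unit for $C^*_r(G,\Sigma)$, as required, and notably needs neither second countability nor $\sigma$-compactness of $G^0$, in contrast to the countable construction available when $A$ is separable.
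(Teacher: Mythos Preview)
Your argument is correct and close in spirit to the paper's, but there is a small strategic difference worth noting. The paper takes an \emph{arbitrary} approximate unit $(e_i)$ for $C_0(G^0)$, uses Lemma~\ref{separation}(1) to show that any such net converges to $1$ uniformly on compact subsets of $G^0$, and then argues that $e_i * f = (e_i \circ r)f \to f$ in the inductive limit topology (hence in the $I$-norm, hence in the reduced norm) for each $f \in C_c(G,\Sigma)$; boundedness of $(e_i)$ then extends this to all of $A$. You instead \emph{construct} a specific net indexed by compact subsets of $G^0$, tailored so that $e_K f = f$ holds \emph{exactly} once $K \supseteq r(\overline{\mathrm{supp}'(f)}) \cup s(\overline{\mathrm{supp}'(f)})$, which lets you bypass the inductive-limit and $I$-norm discussion entirely. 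The trade-off is that you establish only that \emph{some} approximate unit for $A$ lies in $C_0(G^0)$, whereas the paper's argument yields the slightly stronger fact that \emph{every} approximate unit of $C_0(G^0)$ already serves as one for $C^*_r(G,\Sigma)$. Both routes rest on the same convolution identity $(ef)(\gamma) = e(r(\gamma))f(\gamma)$ and the same density-plus-boundedness extension to all of $A$.
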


\begin{proof}
Let $(e_i)_{i \in I}$ be an approximate unit for $C_{0}(G^{0})$. Then it converges uniformly to 1 on any compact subset of $G^0$. Indeed, for any compact $L \subset G^{0}$, there exists, by Lemma \ref{separation}, $b \in C_0(G^0)$ such that $b \equiv 1$ on $L.$ Since $\norm{e_ib-b}_{\infty} \to 0$ we have that the approximate unit converges to 1 uniformly on $L$. 

Now let $f \in C_C(G,\Sigma)$ with compact support $K$. It is clear that the net $(e_i*f)_{i \in I} = ((e_i\circ r)f)_{i \in I}$ converges uniformly to $f$ on $K$, as the net $(e_i)_{i \in I}$ converges uniformly to 1 on $r(K)$. Hence $(e_i*f)_{i \in I}$ converges to $f$ in the inductive limit topology, which by Proposition II.1.4 (i) in \cite{Ren book} implies that it converges in the $I$-norm and hence in $C^{*}_{r}(G,\Sigma)$. The same can be said for $(f*e_i)_{i \in I}$. As by definition the net $(e_i)$ is bounded, the above holds when replacing $f$ by any $a \in C^{*}_{r}(G,\Sigma)$.
\end{proof}

In order to get that $C_0(G^0)$ is a masa in $C^*_r(G,\Sigma)$, it suffices to assume that $G$ is effective in Theorem 4.2 in \cite{Ren} rather than topologically principal.

As was stated in Section 2, in order to get a unique faithful conditional expectation in Proposition 4.3 in \cite{Ren}, Renault makes use of Urysohn's lemma. We may now use Lemma \ref{separation} (2) instead and obtain the same result. Additionally, assuming effectiveness rather than topological principality suffices.

To show regularity, note that for an étale and locally compact groupoid $G$ we have that every $f \in C_C(G,\Sigma)$ is the linear span of continuous sections compactly supported by open bisections, and such elements are normalizer elements (by Proposition 4.8 (i) in \cite{Ren}). 

Together, this gives the first statement of Theorem \ref{my theorem}. For the reverse statement, recall from Section 2 that Renault proves that there is an extension $$\mathcal{B} \rightarrow \Sigma(B) \rightarrow G(B)$$ by making use of the fact that one can find a compactly supported continuous function with support inside a given open set. We may now instead use Lemma \ref{separation} (3) for this. 

The rest of the arguments all use separation properties that are found in Lemma \ref{separation} without having to allude to second countability. Of course in Theorem 5.9 in \cite{Ren} one would not get a second countable groupoid $G(B)$ if separability of $A$ is removed, and hence $G(B)$ might not be topologically principal. However, being an étale groupoid of germs, it is automatically effective. Thus the second statement of Theorem \ref{my theorem} is obtained.

In order to get Diagram A without second countability or topological principality assumptions, Proposition 4.13 of \cite{Ren} must be slightly modified. Renault proves that given an open bisection $S$, there exists $n \in N(B)$ for which $S=supp^\prime(n)$. In order to achieve this, Renault claims the existence of an element in $C_0(G^0)$ whose open support is exactly $s(S)$. Of course when the space is second countable and locally compact, it is $\sigma$-compact and it is easy to obtain such an element. 

Given that we are not assuming second countability, we can only work with a slightly weaker version of this argument. To obtain Proposition 4.13 in \cite{Ren} without second countability it suffices to localise to an open set contained in $S$, as the groupoid of germs induced by $\alpha(\mathcal{S}),$ where $\mathcal{S}$ is the set of all open bisections, is the same as that induced by $\alpha(\mathcal{S}^\prime)$, where $\mathcal{S}^\prime$ is a refinement of $\mathcal{S}$ (for the definition of $\alpha$ see Section 3 in \cite{Ren}). Theorem 12 in the Appendix of \cite{Fell} ensures that we have a non-vanishing continuous section for the associated line bundle $L$ on a neighbourhood $T$ of $g \in S$, contained in $S$. Hence $L\vert_T$ is trivializable. We may use Lemma \ref{separation} to say that there exists a $c \in C_0(G^0)$ with compact support inside $s(T)$. Hence $U=supp^\prime(c)$ is an open set inside $s(T)$, and by the fact that $s: T \rightarrow s(T)$ is a homeomorphism we can pull $U$ back to an open bisection $V \subset T$. Restricting attention to $V$ we have that $L\vert_V$ is trivializable.

Let $u: V \rightarrow L$ be a non-vanishing section, and without loss of generality assume $\norm{u(g)} = 1$ for all $g \in V$. Define $n: G \rightarrow L$ by $n(g) = u(g)c(s(g))$ if $g \in V$, and 0 otherwise. There exists a net $\{h_{j}\}_{j \in J}$ in $C_{C}(U)$ converging uniformly to $c$. Hence $uh_{j} \in C_C(G,\Sigma)$ converges uniformly to $n$. Hence it converges in the $I$-norm as this coincides with the supremum norm on $C_0(G^0)$, and hence in the $C^*$-algebra norm. Hence $n \in A$ with $\mathrm{supp}^\prime(n)=V$, and hence $n \in N(B).$ The remaining parts of the proof work by just assuming effectiveness of the groupoid. 

 To get the isomorphism of the middle arrow in Diagram A we offer an alternative proof to Proposition 4.15 of \cite{Ren}, thanks to Xin Li:

\begin{propn}\label{commuting diagram}
Let $A = C^{*}_{r}(G,\Sigma)$ and $B=C_{0}(G^{0})$, for a twisted étale Hausdorff locally compact effective groupoid $(G,\Sigma)$. Then we have a canonical isomorphism of extensions:
\begin{center}
\begin{tikzcd}
\mathcal{B} \arrow[r] \arrow[d] & \Sigma(B) \arrow[r] \arrow[d] & G(B) \arrow[d] \\
\mathbb{T} \times G^{0} \arrow[r] & \Sigma \arrow[r] & G
\end{tikzcd}
\end{center}

\end{propn}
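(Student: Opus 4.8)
The plan is to build the middle arrow $\Phi\colon\Sigma(B)\to\Sigma$ by hand and to show it is an isomorphism of twists compatible with the left arrow $\mathcal B\cong\mathbb T\times G^0$ and the right arrow $G(B)\cong G$ (the latter being the modified Proposition 4.13 in \cite{Ren} established just above). Recall that each $n\in N(B)$ has open support an open bisection (Proposition 4.8 in \cite{Ren}), so for $(x,n,y)\in D$ with $\alpha_n(y)=x$ there is a unique $\gamma\in\mathrm{supp}^\prime(n)$ with $s(\gamma)=y$ and $r(\gamma)=x$. Viewing $n$ as a $\mathbb T$-equivariant map $\Sigma\to\mathbb C$, its restriction to the fibre $\Sigma_\gamma$ is nonzero, so there is a unique $\sigma\in\Sigma_\gamma$ with $n(\sigma)\in(0,\infty)$, and since $\mathrm{supp}^\prime(n)$ is a bisection and the Haar system is counting measure one has $n(\sigma)=\sqrt{n^\ast n(y)}$. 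I set $\Phi(x,n,y)=\sigma$. The computation that drives everything is the identity $\hat a(x,n,y)=a(\Phi(x,n,y))$ for all $a\in A$: because $y$ is a unit and $n$ is supported on a bisection, the convolution defining $n^\ast a$ collapses to a single term, so $P(n^\ast a)(y)=(n^\ast a)(y)=\overline{n(\sigma)}\,a(\sigma)=n(\sigma)\,a(\sigma)$, and dividing by $\sqrt{n^\ast n(y)}=n(\sigma)$ gives $\hat a(x,n,y)=a(\sigma)$. That $\Phi$ descends to $\Sigma(B)=D/\!\sim$ follows because if $(x,n,y)\sim(x^\prime,n^\prime,y^\prime)$ then $y=y^\prime$ and $nb=n^\prime b^\prime$ with $b(y),b^\prime(y)>0$; comparing supports near $y$ forces the distinguished elements $\gamma$ to coincide and $n(\gamma),n^\prime(\gamma)$ to differ by a positive scalar, so the associated $\sigma$ agree.

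Next I would check that $\Phi$ is a $\mathbb T$-equivariant groupoid homomorphism. For composable classes $[x,n,y]$ and $[y,m,w]$ one has $[x,n,y][y,m,w]=[x,nm,w]$, and the same collapse of the convolution gives $(nm)(\sigma\rho)=n(\sigma)m(\rho)>0$ for $\sigma=\Phi[x,n,y]$ and $\rho=\Phi[y,m,w]$, whence $\Phi[x,nm,w]=\sigma\rho$; inverses and units are handled identically, and $\mathbb T$-equivariance is immediate. For bijectivity I would exhibit the inverse $\Psi\colon\Sigma\to\Sigma(B)$: given $\sigma\in\Sigma$ over $\gamma$, the modified Proposition 4.13 provides $n\in N(B)$ with $\gamma\in\mathrm{supp}^\prime(n)$ and $n(\sigma)>0$, and I put $\Psi(\sigma)=[r(\gamma),n,s(\gamma)]$. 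The one substantive check is independence of the choice of $n$: if $m$ is another such normalizer, then on the bisection $\mathrm{supp}^\prime(n)\cap\mathrm{supp}^\prime(m)$ both are nonvanishing, and choosing $b^\prime\in B$ positive at $y=s(\gamma)$ with small support and $b\in B$ with $b/b^\prime$ equal to the ratio of $m$ to $n$ pulled back along $s$ (continuous and positive at $y$) yields $nb=mb^\prime$, so $(r(\gamma),n,s(\gamma))\sim(r(\gamma),m,s(\gamma))$. The relations $\Phi\Psi=\mathrm{id}$ and $\Psi\Phi=\mathrm{id}$ are then immediate from the defining property $n(\sigma)>0$.

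It remains to verify that $\Phi$ is a homeomorphism and that the two squares commute. For the topology I would argue locally: by Lemma 4.16 in \cite{Ren} the topology on $\Sigma(B)$ makes it a locally trivial twist over $G(B)$, with trivializations furnished by the sections $\hat n$, while $\Sigma$ is a locally trivial twist over $G$ with trivializations furnished by the normalizer sections $n$. The identity $\hat a=a\circ\Phi$ shows that $\Phi$ carries one trivializing family to the other, so over each open bisection $\Phi$ becomes the identity of $\mathbb T\times(\text{base})$ after the right-arrow identification $G(B)\cong G$; hence $\Phi$ is a local, and therefore global, homeomorphism. The right square commutes because $\dot{\Phi[x,n,y]}=\gamma$ is exactly the image of $[x,\alpha_n,y]\in G(B)$ in $G$, and the left square commutes because for $[x,b,x]\in\mathcal B$ the element $\Phi[x,b,x]\in\Sigma_x$ is the unit over $x$ rescaled by $b(x)/\abs{b(x)}$, matching the image of $(b(x)/\abs{b(x)},x)\in\mathbb T\times G^0$.

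I expect the main obstacle to be the topological step: matching the topology of $\Sigma(B)$ with that of $\Sigma$ without second countability, precisely where Renault's argument would invoke paracompactness or normality. Here it must instead be routed through local triviality (Lemma 4.16 in \cite{Ren}) and the separation results of Lemma~\ref{separation}, with care that the trivializations coming from normalizer sections are genuinely compatible under $\Phi$. A secondary subtlety is the bookkeeping behind the master identity—justifying that $n^\ast a$ collapses to a single fibre term—which rests on $\mathrm{supp}^\prime(n)$ being a bisection and on $y$ being a unit; effectiveness enters throughout as the standing hypothesis guaranteeing that $(A,B)$ is a Cartan pair, so that $N(B)$, $\alpha_n$, and $P$ behave as used above.
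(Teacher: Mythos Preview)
Your approach is correct and essentially identical to the paper's: your map $\Phi[x,n,y]=\sigma$ characterized by $n(\sigma)>0$ is exactly the paper's formula $[x,n,y]\mapsto \frac{n(\sigma)}{\lvert n(\sigma)\rvert}\sigma$ (for any lift $\sigma$ of $\gamma$), and your inverse $\Psi$ likewise coincides with the paper's $\sigma\mapsto\bigl[x,\frac{\overline{n(\sigma)}}{\lvert n(\sigma)\rvert}n,y\bigr]$, since multiplying $n$ by that unimodular scalar produces a normalizer positive at $\sigma$. The paper simply declares the remaining verifications ``tedious but straightforward'', whereas you supply them---including the useful identity $\hat a=a\circ\Phi$ and the local-triviality argument for the topology---so your write-up is in fact more complete than the paper's own proof.
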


\begin{proof}
We have already shown the left and right vertical arrows to be isomorphisms. We just need an isomorphism of the middle vertical arrow making the diagram commute. This is achieved as follows. We define a map $\Sigma(B) \rightarrow \Sigma$ by $[x,n,y] \to \frac{n(\sigma)}{\abs{n(\sigma)}}\sigma$, where $\sigma \in \Sigma$ chosen so that $\dot{\sigma} \in supp^{\prime}(n)$ with $s(\dot{\sigma}) = y$ and $r(\dot{\sigma}) = x$. The inverse $\Sigma \rightarrow \Sigma(B)$ is defined by sending $\sigma \to \left[x,\frac{\overline{n(\sigma)}}{\abs{n(\sigma)}}n,y \right]$ where $n \in N(B)$ chosen so that $n(\sigma) \neq 0$, and $y = s(\dot{\sigma})$, $x = r(\dot{\sigma})$. It is a tedious but straightforward task to check that these maps are well-defined groupoid homomorphisms, and are inverse to each other.   
\end{proof}

Finally, we remark that since the proof of Proposition 5.11 in \cite{Ren} does not use the separability of the $C^*$-algebra nor the second countability of the groupoid, it follows that for any Cartan pair $(A,B)$ we have that $B$ satisfies the unique extension property if and only if the groupoid $G(B)$ is principal.

\end{section}

\end{document}